\DeclareMathOperator{\lcm}{lcm}
\def\lplus{\rotatebox[]{-90}{$\pm$}}
\def\rplus{\rotatebox[]{90}{$\pm$}}
\def\lminus{\vdash}
\def\rminus{\dashv}
\def\cal{\mathcal}
\def\a{\alpha}
\def\d{\delta}
\def\e{\epsilon}
\def\0{{\bf 0}}
\def\A{\left<A\right>}
\def\B{\left<B\right>}
\newcommand{\Q}{{\mathbb Q}}
\newcommand{\Z}{{\mathbb Z}}
\newtheorem{dfn}[thm]{Definition}
\newtheorem{prp}[thm]{Proposition}
\begin{document}

\begin{frontmatter}

\title{Basis for the linear space of matrices under equivalence\thanksref{footnoteinfo}}

\thanks[footnoteinfo]{ This work was supported partly by 
Fundamental Research Funds for the Central Universities (No. HEUCF160404) and
Natural Science Foundation of Heilongjiang Province of China (No. LC2016023).}

\author{Kuize Zhang}

\address{College of Automation, Harbin Engineering University, 150001, Harbin, PR China}

\begin{keyword}
identity equivalence, semi-tensor addition, semi-tensor product, algebraic structure, basis
\end{keyword}

\begin{abstract}
	The semi-tensor product (STP) of matrices which was proposed by Daizhan Cheng in 2001 \cite{che01},
	is a natural generalization
	of the standard matrix product and well defined at every two finite-dimensional matrices.
	In 2016, Cheng proposed a new concept of semi-tensor addition (STA) which is a natural generalization of 
	the standard matrix addition and well defined at every two finite-dimensional matrices with the same ratio
	between the numbers of rows and columns \cite{che16STA}. In addition, an identify equivalence relation
	between matrices
	was defined in \cite{che16STA}, STP and STA were proved valid for the corresponding identify equivalence classes,
	and  the corresponding quotient space was endowed with an algebraic structure and a manifold structure.
	In this follow-up paper, we give a new concise basis for the quotient space, which also shows 
	that the Lie algebra corresponding to the quotient space is of countably infinite dimension.
\end{abstract}

\end{frontmatter}

\section{Introduction}

In this section, we introduce preliminaries which have been shown in \cite{che01,che16STA}.

\subsection{Semi-tensor product}

In this paper, we use ${\cal M}_{m\times n}$ to denote the set of $m\times n$ real matrices,
where $m,n$ are two positive integers. 

\begin{dfn}\label{d3.1} Let ~$A\in {\cal M}_{m\times n}$,  $B\in {\cal M}_{p\times q}$. Denote by $t=\lcm\{n,p\}$, the least common multiple of $n$ and $p$. Then
\begin{enumerate}
\item the left semi-tensor product (STP) of
$A$ and $B$ is defined as
\begin{align} \label{3.1} A\ltimes B:=\left(A\otimes I_{t/n}\right)\left(B\otimes I_{t/p}\right),
\end{align}
\item the right STP of
$A$ and $B$ is defined as
\begin{align} \label{3.2} A\rtimes B:=\left(I_{t/n}\otimes A\right)\left( I_{t/p} \otimes B\right),
\end{align}
\end{enumerate}
\end{dfn}
where $\otimes$ is the Kronecker product \cite{hor91}, \cite{zha04}.

In the following we only discuss the left STP, and briefly call it STP.
\begin{rem}
	For every $A\in{\cal M}_{m\times n}$ and $B\in{\cal M}_{p\times q}$, 
	$A\ltimes B\in{\cal M}_{(mt)/n\times (qt)/p}$, where $t=\lcm(n,p)$.
\end{rem}

\subsection{Semi-tensor addition}

Denote
$$
{\cal M}_{\mu}:=\left\{M\in {\cal M}_{m\times n}\;|m,n\in\Z_{+},\;m/n=\mu \right\},\quad \mu\in \Q_+,
$$
where $\Z_{+}$ and $\Q_{+}$ denote the sets of positive integers and positive rational numbers, respectively.
Denote
\begin{align}\label{2.1}
{\cal M}=\bigcup_{\mu\in \Q_+}{\cal M}_{\mu}.
\end{align}

Note that the right hand side of (\ref{2.1}) is a partition of the left hand side of 
\eqref{2.1}. That is,
$$
{\cal M}_{\mu_1}\cap {\cal M}_{\mu_2}=\emptyset, \quad \forall \mu_1\neq \mu_2.
$$

\begin{dfn}\label{d2.5} Let $A,~B\in {\cal M}_{\mu}$, where $\mu\in\Q_{+}$.
	Precisely, $A\in {\cal M}_{m\times n}$ and $B\in {\cal M}_{p\times q}$, and $m/n=p/q=\mu$. Set $t=\lcm\{m,p\}$.
\begin{enumerate}
\item The left semi-tensor addition (STA) of $A$ and $B$, denote by
$\lplus$, is defined as
\begin{align}\label{2.2}
A\lplus B:=\left(A\otimes I_{t/m}\right)+\left(B\otimes I_{t/p}\right).
\end{align}
We also denote the left semi-tensor subtraction as
\begin{align}\label{2.3}
A\lminus B:=A\lplus (-B).
\end{align}
\item The right STA of $A$ and $B$, denote by
$\rplus$, is defined as
\begin{align}\label{2.4}
A\rplus B:=\left(I_{t/m}\otimes A\right)+\left(I_{t/p}\otimes B\right).
\end{align}
We also denote the right semi-tensor subtraction as
\begin{align}\label{2.5}
A\rminus B:=A\rplus (-B).
\end{align}
\end{enumerate}
\end{dfn}

\begin{rem}\label{r2.6} Let $\sigma\in \{\lplus, \lminus, \rplus, \rminus\}$ be one of the four binary operations,
	and $\mu_1$ and $\mu_2$ two positive rational numbers.
\begin{enumerate}
	\item If $\mu_1=\mu_2=:\mu$, then for all $A,~B\in {\cal M}_{\mu}$, $A\sigma B\in {\cal M}_{\mu}$.
	\item For all $A\in{\cal M}_{\mu_1}$ and $B\in{\cal M}_{\mu_2}$, $A\ltimes B\in{\cal M}_{\mu_1\mu_2}$.
	\item If $A$ and $B$ are as in Definition \ref{d2.5}, then $A\sigma B\in {\cal M}_{t\times \frac{nt}{m}}$.
\end{enumerate}
\end{rem}

Similar to the case for STP, 
in the following we only discuss the left STA, and briefly call it STA.

\subsection{Identity equivalence relation}

\begin{dfn}\label{d2.1} Let $A,~B\in {\cal M}$ be two matrices.
\begin{enumerate}
	\item $A$ and $B$ are said to be left identity equivalent (LIE), denoted by $A\sim_{\ell} B$, if there exist two identity matrices $I_s$, $I_t$, $s,t\in \Z_{+}$, such that
$$
A\otimes I_s=B\otimes I_t.
$$
\item  $A$ and $B$ are said to be right identity equivalent (RIE), denoted by $A\sim_r B$, if there exist two identity matrices $I_s$, $I_t$, $s,t\in \Z_{+}$, such that
$$
I_s\otimes A=I_t\otimes B.
$$
\end{enumerate}
\end{dfn}

\begin{rem}\label{r2.2} It is easy to verify that the LIE $\sim_{\ell}$ (similarly, RIE $\sim_r$)
	is an equivalence relation. That is, it is (i) reflexive ($A\sim_{\ell} A$);
	(ii) symmetric (if $A\sim_{\ell} B$, then $B\sim_{\ell} A$);
	and (iii) transitive (if $A\sim_{\ell} B$, and  $B \sim_{\ell} C$, then  $A \sim_{\ell} C$).
\end{rem}

In the sequel we only consider the left LIE, call it LIE for short, and denote it briefly by $\sim$.

\begin{dfn}\label{d2.3}  We are given a matrix $A\in {\cal M}$.
\begin{enumerate}
\item The equivalent class of $A$, denoted by
$$
\A:=\left\{B\;|\;B\sim A\right\}.
$$
\item $A$ is called reducible, if there is an identify matrix $I_s$, where $s\geq 2$, and a matrix $B$, such that
$A=B\otimes I_s$. Otherwise, $A$ is called irreducible.
\end{enumerate}
\end{dfn}

\begin{thm}\label{t2.4}
	For every $\bar A\in{\cal M}$, in $\left<\bar A\right>$ there exists a unique
	$A_0\in\left<\bar A\right>$ such that $A_0$ is irreducible.
\end{thm}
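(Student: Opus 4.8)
The plan is to establish existence and uniqueness separately, using a notion of "size" of a matrix to control the reduction process. For a nonzero matrix $A\in{\cal M}_{m\times n}$, call $mn$ its size (one may treat the zero matrix separately, or note it behaves the same way). If $A$ is reducible, write $A=B\otimes I_s$ with $s\geq 2$; then $B$ has strictly smaller size than $A$. Hence, starting from any $\bar A$ and repeatedly factoring out identity matrices, the process must terminate after finitely many steps at some irreducible $A_0$ with $A_0\sim\bar A$ (each step produces an LIE-equivalent matrix since $B\sim B\otimes I_s$). This gives existence.

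For uniqueness, the key lemma I would isolate is a cancellation/divisibility statement about the Kronecker product: if $A\otimes I_s = C\otimes I_t$ for matrices $A,C$ and positive integers $s,t$, then there is a single matrix $D$ and integers $a,b$ with $A=D\otimes I_a$ and $C=D\otimes I_b$ — in other words, any two matrices in the same LIE class have a common "Kronecker divisor" that is itself in the class. The cleanest route is to first reduce to the case $\gcd(s,t)=1$ by cancelling the common factor (this itself needs the observation that $X\otimes I_k = Y\otimes I_k$ forces $X=Y$, which follows by comparing entries), and then show that $A\otimes I_s = C\otimes I_t$ with $\gcd(s,t)=1$ forces $t\mid$ the relevant dimensions of $A$, so that $A = D\otimes I_t$ for some $D$; symmetrically $C=D\otimes I_s$. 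Concretely one looks at the block structure: the equation says the $m/?$-blocks of both sides agree, and coprimality of $s,t$ pins down how the blocks must be organized.

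Granting this lemma, suppose $A_0$ and $A_0'$ are both irreducible and both LIE-equivalent to $\bar A$; by transitivity $A_0\sim A_0'$, so $A_0\otimes I_s = A_0'\otimes I_t$ for some $s,t$. By the lemma there is $D$ with $A_0 = D\otimes I_a$ and $A_0' = D\otimes I_b$. Irreducibility of $A_0$ forces $a=1$, hence $A_0=D$; likewise $A_0'=D$; so $A_0=A_0'$.

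The main obstacle is the cancellation lemma — specifically the coprime case, where one must argue from $A\otimes I_s = C\otimes I_t$ with $\gcd(s,t)=1$ that $A$ genuinely has $I_t$ as a Kronecker factor. The subtlety is that $A\otimes I_s$ having a certain repetitive block pattern does not obviously come from structure already present in $A$; one has to use that the pattern imposed by $\otimes I_s$ and the pattern imposed by $\otimes I_t$ are "transverse" because $s$ and $t$ are coprime, so their common refinement forces $A$ itself to be block-repetitive with period $t$. I would prove this by an explicit index computation: write the $(i,j)$ entry of $A\otimes I_s$ in terms of the entry of $A$ and the positions within the $I_s$ block, do the same for $C\otimes I_t$, equate, and use the coprimality of $s$ and $t$ together with the uniqueness of division with remainder to extract the claimed factorization. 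Everything else is bookkeeping with dimensions and the elementary fact that $\otimes I_k$ is injective.
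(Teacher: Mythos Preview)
The paper does not actually prove Theorem~\ref{t2.4}; it is listed in Section~1 among the preliminaries imported from \cite{che16STA}, and no argument is given. The only commentary the paper adds is the sentence immediately following the statement, namely that the irreducible element of $\langle\bar A\rangle$ is the matrix in the class with the minimal number of rows. That is a characterization rather than a proof, and it too implicitly relies on the cancellation phenomenon you isolate (one needs to know that an irreducible matrix cannot be LIE-equivalent to a strictly smaller one).

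Your proposal is sound on its own terms. Existence by descent on size is immediate. For uniqueness you have correctly located the crux: from $A\otimes I_s = C\otimes I_t$ one must extract a common Kronecker divisor $D$ with $A=D\otimes I_a$ and $C=D\otimes I_b$. Your reduction to the coprime case via injectivity of $X\mapsto X\otimes I_k$ is right, and the coprime case does go through by the index computation you sketch: writing each global index in two ways (once modulo $s$, once modulo $t$) and using $\gcd(s,t)=1$ forces the block-periodicity of $A$ with period $t$. Since there is no proof in the present paper to compare against, there is nothing further to contrast; your outline, with the coprime step written out explicitly, would constitute a complete argument.
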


By Theorem \ref{t2.4}, for each $\A$ 
and all matrices $B\in{\cal M}_{m\times n}$ and $C\in
{\cal M}_{p\times q}$ both in $\A$,
one has $m/n=p/q$. Then the matrix in $\A$ that has the minimal number 
of rows is the unique irreducible element of $\A$.

The following corollary reveals the structure of identity equivalence classes.

\begin{cor}\label{c2.400} Let $A_0$ be the only irreducible element in $\A$. Then
\begin{align}\label{2.05}
\A=\{A_0\otimes I_s\;|\;s=0,1,2,\cdots\}.
\end{align}
\end{cor}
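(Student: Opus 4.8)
The plan is to establish the two inclusions in \eqref{2.05} separately, relying on Theorem \ref{t2.4} together with a single elementary fact about reducibility. As a preliminary step, observe that $A_0\in\left<A\right>$ means $A_0\sim A$, so --- since $\sim$ is an equivalence relation (Remark \ref{r2.2}) --- we have $\left<A\right>=\left<A_0\right>$, and I will phrase everything in terms of $\left<A_0\right>$.

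For the inclusion $\{A_0\otimes I_s\}\subseteq\left<A\right>$: for each index $s$, the equality $(A_0\otimes I_s)\otimes I_1=A_0\otimes I_s$ is, by Definition \ref{d2.1} applied with the two identity matrices $I_1$ and $I_s$, exactly the assertion $A_0\otimes I_s\sim A_0$; hence $A_0\otimes I_s\in\left<A_0\right>=\left<A\right>$.

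For the reverse inclusion, the key step I would carry out is the following factorization fact: every $B\in{\cal M}$ can be written as $B=B_0\otimes I_k$ with $B_0$ irreducible and $k$ a positive integer. I would prove this by strong induction on the number of rows of $B$: if $B$ is already irreducible, there is nothing to do; otherwise Definition \ref{d2.3} supplies $B=C\otimes I_r$ with $r\geq 2$, the matrix $C$ has strictly fewer rows than $B$ (because $r\geq2$), the induction hypothesis gives $C=C_0\otimes I_j$ with $C_0$ irreducible, and one sets $B_0=C_0$, $k=jr$, using $I_j\otimes I_r=I_{jr}$. Granting this, let $B\in\left<A\right>=\left<A_0\right>$ be arbitrary and factor $B=B_0\otimes I_k$. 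From $B_0\otimes I_k=B\otimes I_1$ we read off $B_0\sim B\sim A_0$, so $B_0$ is an irreducible element of $\left<A_0\right>$; the uniqueness assertion of Theorem \ref{t2.4} then forces $B_0=A_0$, whence $B=A_0\otimes I_k$ belongs to the right-hand side of \eqref{2.05}.

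I do not anticipate a genuine obstacle: the factorization fact is a routine induction, and the real substance --- existence and, above all, uniqueness of the irreducible representative --- is already packaged in Theorem \ref{t2.4}, which I take as given. The only points requiring care are checking that the induction on row count is strictly decreasing (it is, since $r\geq2$) and keeping the bookkeeping consistent when passing between $B$, its irreducible factor $B_0$, and $A_0$.
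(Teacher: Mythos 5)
Your argument is correct. Note, though, that the paper offers no proof of this corollary at all: it is stated as an immediate consequence of Theorem \ref{t2.4}, so there is no official argument to compare against. What you have done is supply the one ingredient the paper leaves tacit, namely the factorization fact that every $B\in{\cal M}$ can be written as $B_0\otimes I_k$ with $B_0$ irreducible, proved by strong induction on the row count (strictly decreasing because $r\geq 2$); combined with Definition \ref{d2.1}, transitivity of $\sim$, and the uniqueness half of Theorem \ref{t2.4}, this gives both inclusions exactly as one would expect, and your use of $I_1$ and $I_s$ in Definition \ref{d2.1} for the easy inclusion is fine. In fact your factorization lemma is essentially the existence part of Theorem \ref{t2.4} in disguise, so the only genuinely new work in the corollary is the uniqueness-based identification $B_0=A_0$, which you carry out correctly. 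One cosmetic point: the paper's indexing ``$s=0,1,2,\cdots$'' in \eqref{2.05} is loose, since $I_0$ is undefined; your reading, with $s$ a positive integer and $A_0\otimes I_1=A_0$ accounting for $A_0$ itself, is the intended one, and it would be worth one sentence to say so explicitly.
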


\begin{thm}\label{t2.8}
	Consider the algebraic system $({\cal M}_{\mu},\lplus)$, where $\mu\in\Q_{+}$.
	The LIE $\sim$ is a congruence with respect to $\lplus$.
\end{thm}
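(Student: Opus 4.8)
The plan is to show that $\sim$ respects $\lplus$, i.e., that whenever $A\sim A'$ and $B\sim B'$ (with $A,B,A',B'$ all lying in a common ${\cal M}_\mu$), one has $A\lplus B\sim A'\lplus B'$. By symmetry and transitivity of $\sim$ (Remark \ref{r2.2}), it suffices to treat the case $B=B'$ and prove $A\lplus B\sim A'\lplus B$; the general case then follows by applying this twice, once in each argument. Moreover, by Corollary \ref{c2.400}, any $A'$ with $A'\sim A$ differs from the irreducible representative by a Kronecker factor $I_s$, so it is enough to prove the single reduction step: if $A\in{\cal M}_{m\times n}$ and $C:=A\otimes I_s$, then for every $B\in{\cal M}_\mu$ (same $\mu=m/n$) we have $A\lplus B\sim C\lplus B$.

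The key computation is to expand both sides using the definition \eqref{2.2} and the standard Kronecker identities $(X\otimes I_a)\otimes I_b = X\otimes I_{ab}$ and $(XY)\otimes I_a=(X\otimes I_a)(Y\otimes I_a)$, together with the fact that $(X\otimes I_a)+(Y\otimes I_a)=(X+Y)\otimes I_a$ when $X,Y$ have the same size. First I would compute $A\lplus B$ with $t_1=\lcm\{m,p\}$ where $B\in{\cal M}_{p\times q}$, obtaining $(A\otimes I_{t_1/m})+(B\otimes I_{t_1/p})$, an element of ${\cal M}_{t_1\times (nt_1/m)}$. Next I would compute $C\lplus B$: since $C=A\otimes I_s$ has size $ms\times ns$, the relevant lcm is $t_2=\lcm\{ms,p\}$, and $C\lplus B = (A\otimes I_s\otimes I_{t_2/(ms)})+(B\otimes I_{t_2/p}) = (A\otimes I_{t_2/m})+(B\otimes I_{t_2/p})$. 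The goal is to exhibit identity matrices $I_d$, $I_e$ with $(A\lplus B)\otimes I_d = (C\lplus B)\otimes I_e$. The natural choice is to let $t=\lcm\{t_1,t_2\}$ and set $d=t/t_1$, $e=t/t_2$; then tensoring each side by the appropriate identity and using the Kronecker identities above collapses both to the common matrix $(A\otimes I_{t/m})+(B\otimes I_{t/p})\in{\cal M}_{t\times(nt/m)}$, which establishes the equivalence.

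The main obstacle is purely bookkeeping: one must verify that all the index ratios appearing (such as $t/m$, $t/p$, $t/(ms)$) are genuine positive integers, which requires checking the divisibility relations $m\mid t_1$, $p\mid t_1$, $ms\mid t_2$, $p\mid t_2$, $t_1\mid t$, $t_2\mid t$, and then that $m\mid t$ and $p\mid t$ so the final common form makes sense. These all follow because $t_1,t_2$ are least common multiples of the relevant pairs and $t$ is their lcm, but it is worth stating explicitly that $m\mid ms\mid t_2\mid t$. Once the divisibility is in hand, the algebraic collapse is a routine application of Kronecker-product functoriality and the distributivity of $\otimes$ over $+$, and the congruence property follows. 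I would close by remarking that the same argument applies verbatim with $\lminus$ in place of $\lplus$, since $A\lminus B = A\lplus(-B)$ and negation commutes with all the Kronecker operations used.
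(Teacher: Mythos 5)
Your proposal is correct and follows essentially the same route as the paper: both reduce, via Corollary \ref{c2.400}, to absorbing Kronecker factors $I_s$, and then use the identities $I_a\otimes I_b=I_{ab}$ and $(X+Y)\otimes I_a=X\otimes I_a+Y\otimes I_a$ to exhibit the two sums as a common matrix tensored with identity matrices, which is exactly the definition of $\sim$. The only (harmless) organizational difference is that you vary one operand at a time and glue with symmetry/transitivity (implicitly using commutativity of $\lplus$ for the second slot), whereas the paper normalizes both operands to their irreducible representatives at once and writes both sums as $\left(A_0\otimes I_{R/n}+B_0\otimes I_{R/m}\right)\otimes I_{T/R}$ and $\left(A_0\otimes I_{R/n}+B_0\otimes I_{R/m}\right)\otimes I_{S/R}$.
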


\begin{proof}
	Arbitrarily chosen $\tilde{A},A,\tilde{B},B\in{\cal M}_{\mu}$, we assume $\tilde{A}\sim A$ and $\tilde{B}\sim B$. 
	To prove this theorem, we need to prove $\tilde{A}\lplus \tilde{B}\sim A\lplus B$.
	We give an alternative proof compared to the one in \cite{che16STA}.

	By Theorem \ref{t2.4} and Corollary \ref{c2.400}, there exist unique irreducible matrices
	$A_0\in{\cal M}_{\mu n\times n}$ and $B_0\in{\cal M}_{\mu m\times m}$ such that 
	\begin{align}
		\begin{array}{cc}
			\tilde{A} = A_0 \otimes I_s,&A = A_0 \otimes I_t,\\
			\tilde{B} = B_0 \otimes I_p,&B = B_0 \otimes I_q\\
		\end{array}
	\end{align}
	for some positive integers $s,t,p,q$.
	Set $T:=\lcm(ns,mp)$, $S:=\lcm(nt,mq)$, and $R:=\lcm(n,m)$. Then we have
	\begin{equation}\label{2.605}
		\begin{split}
			\tilde {A} \lplus \tilde{B} &= 
			\left(A_0\otimes I_{s}\otimes I_{T/ns}\right)+
			\left(B_0\otimes I_{p}\otimes I_{T/mp}\right)\\
			&=\left(A_0\otimes I_{R/n}\otimes I_{T/R}\right)+
			\left(B_0\otimes I_{R/m}\otimes I_{T/R}\right)\\
			&=\left(A_0\otimes I_{R/n}+B_0\otimes I_{R/m}\right)
			\otimes I_{T/R}.
		\end{split}
	\end{equation}

	Similarly, we have
	\begin{equation}\label{2.606}
		\begin{split}
			{A} \lplus {B}
			&=\left(A_0\otimes I_{R/n}+B_0\otimes I_{R/m}\right)
			\otimes I_{S/R}.
		\end{split}
	\end{equation}

	\eqref{2.605} and \eqref{2.606} imply $
	\tilde{A}\lplus \tilde{B}\sim A\lplus B$.

\end{proof}

Given $\mu\in\Q_{+}$, define the quotient space $\Sigma_{\mu}$ as
\begin{align}\label{2.7}
	\Sigma_{\mu}:={\cal M}_{\mu}/\sim=\{\A|A\in{\cal M}_{\mu}\}.
\end{align}

According to Theorem \ref{t2.8}, the operation $\lplus$ can be extended to $\Sigma_{\mu}$
as
\begin{align}\label{2.9}
\A\lplus \B:=\left<A\lplus B\right>,\quad \forall \A,\B\in \Sigma_{\mu}.
\end{align}

\begin{thm}\label{t2.9} 
	Using the definition in (\ref{2.9}), the quotient space
	$\left(\Sigma_{\mu}, \lplus\right)$ is a vector space.
\end{thm}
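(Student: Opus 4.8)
The plan is to verify the vector-space axioms for $(\Sigma_\mu,\lplus)$ one at a time, reducing each to the already-established fact (Theorem~\ref{t2.8}) that $\lplus$ descends well-definedly to equivalence classes, together with the representation $\A=\{A_0\otimes I_s\mid s=0,1,2,\dots\}$ from Corollary~\ref{c2.400}. The key organizing idea is that for any finite collection of classes in $\Sigma_\mu$ we may, by choosing sufficiently large tensor factors $I_k$, pick representatives that all live in the same matrix space ${\cal M}_{p\times q}$; on that common space $\lplus$ coincides with ordinary matrix addition. So essentially every axiom is inherited from the ordinary vector-space structure of ${\cal M}_{p\times q}$ once we check that the choice of common representatives is immaterial.

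Concretely, I would proceed as follows. First, commutativity: given $\A,\B$, formula \eqref{2.2} shows $A\lplus B$ and $B\lplus A$ differ only by the order of a sum of two matrices of the same size, hence are equal, so $\A\lplus\B=\B\lplus\A$. Second, associativity: given $\A,\B,\C$ with irreducible representatives $A_0\in{\cal M}_{\mu n\times n}$, $B_0\in{\cal M}_{\mu m\times m}$, $C_0\in{\cal M}_{\mu r\times r}$, set $R=\lcm(n,m,r)$ and use the same computation as in \eqref{2.605} to write both $(\A\lplus\B)\lplus\C$ and $\A\lplus(\B\lplus\C)$ as the class of $A_0\otimes I_{R/n}+B_0\otimes I_{R/m}+C_0\otimes I_{R/r}$; since ordinary addition on ${\cal M}_{R\times nR/n}$ is associative, the two agree. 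Third, the zero element: I claim $\langle\mathbf{0}_{m\times n}\rangle$ for any $m/n=\mu$ (all such zero matrices are mutually equivalent, indeed $\mathbf{0}_{m\times n}=\mathbf{0}_{1\times 1}\otimes$-type reductions land on a common irreducible zero) acts as additive identity: $A\lplus\mathbf{0}$ reduces via \eqref{2.2} to $A\otimes I_{t/m}+\mathbf{0}=A\otimes I_{t/m}\sim A$. Fourth, additive inverses: $\langle -A\rangle$ works, since $A\lplus(-A)$ equals $A\otimes I_{t/m}-A\otimes I_{t/m}=\mathbf{0}$ by \eqref{2.3}. Fifth, the scalar action $c\langle A\rangle:=\langle cA\rangle$ is well defined (if $A\otimes I_s=B\otimes I_t$ then $cA\otimes I_s=cB\otimes I_t$) and the four module axioms — $1\cdot\A=\A$, $(cd)\A=c(d\A)$, $c(\A\lplus\B)=c\A\lplus c\B$, $(c+d)\A=c\A\lplus d\A$ — all follow by passing to common representatives and invoking the corresponding identities for real scalar multiplication on ${\cal M}_{p\times q}$, using that Kronecker product is bilinear, e.g. $(cA)\otimes I_{t/m}=c(A\otimes I_{t/m})$.

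The only step that needs genuine care — and that I would expect to be the main obstacle — is the \emph{well-definedness bookkeeping} underlying the "common representative" maneuver: one must check that whenever representatives are enlarged by extra identity factors, the resulting matrix sums stay in the same equivalence class, and that the distinguished zero class is independent of which dimensions $m,n$ with $m/n=\mu$ one picks. Both reduce to the $\lcm$-manipulations already illustrated in the proof of Theorem~\ref{t2.8} (equations \eqref{2.605}–\eqref{2.606}): the crux identity is that $(X\otimes I_j)+(Y\otimes I_j)=(X+Y)\otimes I_j$ and that $X\otimes I_a\otimes I_b=X\otimes I_{ab}$, which let one slide all computations down to the common "base" dimension $R=\lcm$ of the row-counts of the irreducible representatives. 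Once this lemma-level observation is in hand, every axiom is a one-line reduction, so I would state that observation explicitly first and then run through the axioms in the order above.
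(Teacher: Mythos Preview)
The paper does not supply its own proof of Theorem~\ref{t2.9}: the statement appears in the preliminaries section, which collects results already established in \cite{che16STA}, and the paper only gives alternative proofs for Theorem~\ref{t2.8} and Proposition~\ref{p4.5}. So there is no in-paper argument to compare against.

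Your proposal is correct and is exactly the natural verification one would expect: pass to common-size representatives via Corollary~\ref{c2.400} and the $\lcm$ trick of \eqref{2.605}, and then inherit each axiom from the ordinary vector-space structure of ${\cal M}_{p\times q}$. The well-definedness bookkeeping you flag is indeed the only point requiring care, and it is fully covered by Theorem~\ref{t2.8} together with the identities $(X\otimes I_j)+(Y\otimes I_j)=(X+Y)\otimes I_j$ and $X\otimes I_a\otimes I_b=X\otimes I_{ab}$.

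One small correction: the irreducible representative of the zero class in $\Sigma_\mu$ is the $p\times q$ zero matrix, where $\mu=p/q$ with $p,q$ coprime, not the $1\times 1$ zero matrix unless $\mu=1$. Your parenthetical ``$\mathbf{0}_{1\times 1}\otimes$-type reductions'' is slightly off in general, though the argument that all zero matrices of ratio $\mu$ are mutually equivalent (since $\mathbf{0}_{kp\times kq}=\mathbf{0}_{p\times q}\otimes I_k$) and that this common class acts as the additive identity is fine.
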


\begin{prp}\label{p4.5}
 Consider the algebraic system $\left({\cal M}, \ltimes \right)$.
The LIE $\sim$ is a congruence with respect to $\ltimes$.
\end{prp}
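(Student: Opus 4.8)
The plan is to follow the same template as the proof of Theorem~\ref{t2.8}: reduce each matrix to its irreducible representative, compute both semi-tensor products explicitly, and read off that they differ only by a Kronecker factor of an identity matrix. So fix $\tilde A\sim A$ and $\tilde B\sim B$ in ${\cal M}$; the goal is $\tilde A\ltimes \tilde B\sim A\ltimes B$. By Theorem~\ref{t2.4} and Corollary~\ref{c2.400} there are irreducible $A_0\in{\cal M}_{m\times n}$ and $B_0\in{\cal M}_{p\times q}$ and positive integers $s,t,u,v$ with $\tilde A=A_0\otimes I_s$, $A=A_0\otimes I_t$, $\tilde B=B_0\otimes I_u$, $B=B_0\otimes I_v$.

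Next I would expand $\tilde A\ltimes \tilde B$ directly from Definition~\ref{d3.1}. With $t_1:=\lcm(ns,pu)$, absorbing the identity factors via $I_a\otimes I_b=I_{ab}$ gives
\[
\tilde A\ltimes \tilde B=\left(A_0\otimes I_{t_1/n}\right)\left(B_0\otimes I_{t_1/p}\right).
\]
The crucial remark is that $t_1$ is a common multiple of both $n$ and $p$, so $t_1=k_1L$ with $L:=\lcm(n,p)$ and $k_1:=t_1/L\in\Z_+$. Hence $I_{t_1/n}=I_{L/n}\otimes I_{k_1}$ and $I_{t_1/p}=I_{L/p}\otimes I_{k_1}$, and the mixed-product rule $(X\otimes I_{k_1})(Y\otimes I_{k_1})=(XY)\otimes I_{k_1}$ --- applicable because $A_0\otimes I_{L/n}$ has exactly $L$ columns and $B_0\otimes I_{L/p}$ exactly $L$ rows --- yields
\[
\tilde A\ltimes \tilde B=\left[\left(A_0\otimes I_{L/n}\right)\left(B_0\otimes I_{L/p}\right)\right]\otimes I_{k_1}=\left(A_0\ltimes B_0\right)\otimes I_{k_1},
\]
the last step being Definition~\ref{d3.1} for $A_0,B_0$ (again using $L=\lcm(n,p)$). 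Repeating the computation verbatim for $A\ltimes B$, with $t_2:=\lcm(nt,pv)$ and $k_2:=t_2/L$, gives $A\ltimes B=\left(A_0\ltimes B_0\right)\otimes I_{k_2}$.

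To finish, observe that $\left[\left(A_0\ltimes B_0\right)\otimes I_{k_1}\right]\otimes I_{k_2}=\left[\left(A_0\ltimes B_0\right)\otimes I_{k_2}\right]\otimes I_{k_1}$, so Definition~\ref{d2.1} directly gives $\tilde A\ltimes \tilde B\sim A\ltimes B$; note that it does not matter whether $A_0\ltimes B_0$ is itself irreducible. I expect the only point requiring care to be the middle step: confirming that the integer produced by the outer $\lcm$ is still divisible by $\lcm(n,p)$, and that the row/column counts line up so the mixed-product identity is legitimately applicable; the rest is bookkeeping. As a fallback, one could instead first prove the two one-sided special cases --- that $(A_0\otimes I_s)\ltimes B\sim A_0\ltimes B$ and $A\ltimes(B_0\otimes I_u)\sim A\ltimes B_0$ --- and then chain them together with transitivity of $\sim$, but the direct computation above looks shorter.
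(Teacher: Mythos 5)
Your proof is correct and follows essentially the same route as the paper: reduce both pairs to the unique irreducible representatives $A_0,B_0$ and shuffle identity Kronecker factors via the mixed-product rule. The only cosmetic difference is that you factor each product through the common core, showing $\tilde A\ltimes\tilde B=(A_0\ltimes B_0)\otimes I_{k_1}$ and $A\ltimes B=(A_0\ltimes B_0)\otimes I_{k_2}$ (mirroring the paper's proof of Theorem~\ref{t2.8}), whereas the paper's own proof of Proposition~\ref{p4.5} verifies directly that $\bigl(\tilde A\ltimes\tilde B\bigr)\otimes I_{T_2}=\bigl(A\ltimes B\bigr)\otimes I_{T_1}$ without isolating $A_0\ltimes B_0$.
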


\begin{proof}
	The proof of this Proposition is similar to the one for Theorem \ref{t2.8}.

	Arbitrarily given two identity equivalence classes $\left<\bar A\right>,\left<\bar B\right>\subset{\cal M}$,
	by Corollary \ref{c2.400}, we have $\left<\bar A\right>=\{A_0\otimes I_s|s=0,1,2,\cdots\}$ and 
	$\left<\bar B\right>=\{B_0\otimes I_t|t=0,1,2,\cdots\}$, where $A_0$ and $B_0$ are the unique irreducible
	elements of $\left<\bar A\right>$ and $\left<\bar B\right>$, respectively.
	Arbitrarily chosen $A_0\otimes I_{s_1},A_0\otimes I_{s_2}\in\left<\bar A\right>$,  and
	$B_0\otimes I_{t_1},B_0\otimes I_{t_2}\in\left<\bar B\right>$, we need to prove $(A_0\otimes I_{s_1})
	\ltimes (B_0\otimes I_{t_1})\sim (A_0\otimes I_{s_2})\ltimes (B_0\otimes I_{t_2})$.
	Denote the size of $A_0$ and $B_0$ by $m\times n$ and $p\times q$, respectively.
	Set $\lcm(ns_1,pt_1)=:T_1$, $\lcm(ns_2,pt_2)=:T_2$, then
	\begin{equation}
		\begin{split}
			&\left((A_0\otimes I_{s_1})\ltimes (B_0\otimes I_{t_1})\right)\otimes I_{T_2}\\
			=&\left(\left(A_0\otimes I_{s_1}\otimes I_{\frac{T_1}{ns_1}}\right)\left(B_0\otimes I_{t_1}\otimes I_{\frac{T_1}{pt_1}}\right)\right)\otimes\left(I_{T_2}I_{T_2}\right)\\
			=&\left(\left(A_0\otimes I_{\frac{T_1}{n}}\right)\left(B_0\otimes I_{\frac{T_1}{p}}\right)\right)\otimes\left(I_{T_2}I_{T_2}\right)\\
			=&\left(A_0\otimes I_{\frac{T_1}{n}}\otimes I_{T_2}\right)\left(B_0\otimes I_{\frac{T_1}{p}}\otimes I_{T_2}\right)\\
			=&\left(A_0\otimes I_{\frac{T_2}{n}}\otimes I_{T_1}\right)\left(B_0\otimes I_{\frac{T_2}{p}}\otimes I_{T_1}\right)\\
			=&\left(\left(A_0\otimes I_{\frac{T_2}{n}}\right)\left(B_0\otimes I_{\frac{T_2}{p}}\right)\right)\otimes I_{T_1}\\
			=&\left(\left(A_0\otimes I_{s_2}\otimes I_{\frac{T_2}{ns_2}}\right)\left(B_0\otimes I_{t_2}\otimes I_{\frac{T_2}{pt_2}}\right)\right)\otimes I_{T_1}\\
			=&\left((A_0\otimes I_{s_2})\ltimes (B_0\otimes I_{t_2})\right)\otimes I_{T_1},
		\end{split}
	\end{equation}
	i.e., $(A_0\otimes I_{s_1})\ltimes (B_0\otimes I_{t_1})\sim (A_0\otimes I_{s_2})\ltimes (B_0\otimes I_{t_2})$,
	which completes the proof.
\end{proof}

According to Proposition \ref{p4.5} and Remark \ref{r2.6},
$\ltimes$ is well defined over the quotient product space $\Sigma_{\mu_1}
\times\Sigma_{\mu_2}$ for any $\mu_1,\mu_2\in\Q_{+}$, and maps $\Sigma_{\mu_1}\times\Sigma_{\mu_2}$ to
$\Sigma_{\mu_1\mu_2}$ according to the following operation:
For all $\A\in\Sigma_{\mu_1},\B\in{\Sigma}_{\mu_2}$ and all $\tilde A\in\A,\tilde B\in\B$,
$\A\ltimes\B=:\left<\tilde A\ltimes\tilde B\right>$.

\begin{thm}\label{t7.2} 
	The algebraic system $(\Sigma_1,\lplus,\ltimes)$ with Lie bracket $[\cdot,\cdot]$ defined in (\ref{7.4})
	\begin{align}\label{7.4}
[\A,\B]:=\A\ltimes \B\lminus \B\ltimes \A
\end{align}
	is a Lie algebra, which is called a multi-dimensional Lie algebra.
\end{thm}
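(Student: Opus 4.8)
The plan is to show that $(\Sigma_1,\lplus,\ltimes)$ is an associative $\R$-algebra and then to invoke the classical fact that every associative algebra becomes a Lie algebra under the commutator bracket. Specializing the discussion following Proposition~\ref{p4.5} to $\mu_1=\mu_2=1$, the product $\ltimes$ restricts to a well-defined map $\Sigma_1\times\Sigma_1\to\Sigma_1$ (note that $\Sigma_1$ consists exactly of the classes of square matrices); by Theorem~\ref{t2.9} the pair $(\Sigma_1,\lplus)$ is already an $\R$-vector space, so $\lminus$ is just addition of the additive inverse, and hence the bracket \eqref{7.4} is a well-defined map $\Sigma_1\times\Sigma_1\to\Sigma_1$. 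It therefore remains to verify that $\ltimes$ is $\R$-bilinear with respect to $\lplus$ and that $\ltimes$ is associative on $\Sigma_1$.

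Associativity is inherited from $\mathcal M$: the left STP is associative on all of $\mathcal M$ (a basic identity of STP, see \cite{che01}), so $(\tilde A\ltimes\tilde B)\ltimes\tilde C=\tilde A\ltimes(\tilde B\ltimes\tilde C)$ for representatives, and since $\sim$ is a congruence for $\ltimes$ by Proposition~\ref{p4.5}, this identity descends to $\Sigma_1$. Homogeneity, $\left<cA\right>\ltimes\B=c(\A\ltimes\B)=\A\ltimes\left<cB\right>$ for $c\in\R$, is immediate from $(cA)\ltimes B=\bigl((cA)\otimes I\bigr)(B\otimes I)=c(A\ltimes B)$ and the symmetric computation. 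The substantive point is additivity, i.e. the distributive laws
\begin{equation}\label{7.5}
\A\ltimes(\B\lplus\left<C\right>)=(\A\ltimes\B)\lplus(\A\ltimes\left<C\right>),\qquad
(\A\lplus\B)\ltimes\left<C\right>=(\A\ltimes\left<C\right>)\lplus(\B\ltimes\left<C\right>)
\end{equation}
for $\A,\B,\left<C\right>\in\Sigma_1$.

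Ordinary matrix multiplication does not distribute over $+$ across mismatched sizes, so \eqref{7.5} is not immediate; the key idea is that modulo $\sim$ one may dilate to common sizes. Concretely I would take the irreducible representatives $A_0,B_0,C_0$, pick a single integer $N$ that is a common multiple of all the row and column dimensions of $A_0,B_0,C_0$ that occur, and replace each matrix by a dilation $A_0\otimes I_{k_1}$, $B_0\otimes I_{k_2}$, $C_0\otimes I_{k_3}$ with the $k_i$ chosen so that simultaneously (i) the two dilated summands inside each $\lplus$ have equal size, turning that $\lplus$ into an ordinary $+$, and (ii) the dilated left factor of each $\ltimes$ has column size equal to the row size of its dilated right factor, turning that $\ltimes$ into an ordinary matrix product. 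On such representatives \eqref{7.5} collapses to the ordinary distributive law $X(Y+Z)=XY+XZ$ together with the mixed-product rule $(X\otimes I_a)(Y\otimes I_a)=(XY)\otimes I_a$, after which one checks that the two sides of each identity agree up to $\sim$. Producing one dilation $N$ that achieves (i) and (ii) at once — the restriction to the square classes $\Sigma_1$, where all the relevant row/column ratios equal $1$, is what makes such an $N$ easy to find — is the main technical obstacle; the rest is bookkeeping with the Kronecker product.

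Granting \eqref{7.5} and homogeneity, $(\Sigma_1,\lplus,\ltimes)$ is an associative $\R$-algebra, and the bracket $[\A,\B]=\A\ltimes\B\lminus\B\ltimes\A$ is then automatically: $\R$-bilinear, by bilinearity of $\ltimes$ and $\lplus$; alternating, since $[\A,\A]=\A\ltimes\A\lminus\A\ltimes\A=\mathbf 0$ in the vector space $(\Sigma_1,\lplus)$; and a Jacobi bracket, since expanding $[[\A,\B],\left<C\right>]\lplus[[\B,\left<C\right>],\A]\lplus[[\left<C\right>,\A],\B]$ with bilinearity and associativity yields triple products that cancel pairwise, so the sum is $\mathbf 0$. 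Hence $(\Sigma_1,\lplus,\ltimes,[\cdot,\cdot])$ is a Lie algebra. (The adjective ``multi-dimensional'' refers to the fact that this Lie algebra has countably infinite dimension, which follows from the explicit basis constructed in the remainder of the paper and is not part of the Lie-algebra verification.)
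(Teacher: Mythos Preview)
The paper itself does not prove Theorem~\ref{t7.2}; the result is stated in the preliminaries section as a known fact from \cite{che16STA}, so there is no in-paper argument to compare your proposal against.

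Your strategy---verify that $(\Sigma_1,\lplus,\ltimes)$ is an associative $\R$-algebra and then invoke the standard fact that the commutator of an associative algebra satisfies the Lie axioms---is the natural route and is correct. The one place you only sketch, the distributive law, goes through exactly as you indicate: with $A_0,B_0,C_0$ irreducible of sizes $a\times a$, $b\times b$, $c\times c$ and $N=\lcm(a,b,c)$, the dilations $A_0\otimes I_{N/a}$, $B_0\otimes I_{N/b}$, $C_0\otimes I_{N/c}$ are all $N\times N$, so on these representatives every $\lplus$ collapses to ordinary $+$ and every $\ltimes$ to ordinary matrix product, whence both distributive identities reduce to $X(Y+Z)=XY+XZ$ and $(X+Y)Z=XZ+YZ$ in ${\cal M}_{N\times N}$; since dilation does not change the $\sim$-class, the identities descend to $\Sigma_1$. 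The remaining verifications (bilinearity, alternation, Jacobi via pairwise cancellation of associative triple products) are standard and correct.
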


\section{Basis for the quotient space}

In this section, we show that for each positive rational number $\mu$, the quotient space
$\left(\Sigma_{\mu}, \lplus\right)$ 
is of countably infinite dimension, give a generator for the space, and
then use the generator to construct a basis for the space.

\begin{prp}\label{p2.5}
	For every positive rational number $\mu=p/q$, where $p,q\in\Z_{+}$ are co-prime,
	the vector space $\left(\Sigma_{\mu},\lplus\right)$ is of countably infinite dimension,
	and has a generator
	\begin{equation}\label{2.11}
		\bigcup_{k\in\Z_{+}}\bigcup_{\begin{subarray}{c}1\le i_{kp}\le kp\\1\le j_{kq}\le kq\end{subarray}}
		\left\{\left<E_{i_{kp}j_{kq}}
		^{kp\times kq}\right>\right\},
	\end{equation}
	where and hereinafter $E^{kp\times kq}_{i_{kp}j_{kq}}$ denotes the $kp\times kq$ matrix with the $(i_{kp},j_{kq})$-th
	entry equal to $1$ and all other entries $0$.
\end{prp}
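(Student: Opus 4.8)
The plan is to use the fact that, once $\mu=p/q$ is written in lowest terms, the set ${\cal M}_\mu$ is cleanly graded by matrix size: every matrix with row-to-column ratio $\mu$ has size $kp\times kq$ for a unique $k\in\Z_+$, so ${\cal M}_\mu=\bigcup_{k\ge1}{\cal M}_{kp\times kq}$. The first observation I would record is that on matrices of one fixed size the STA $\lplus$ coincides with ordinary matrix addition (here $t=\lcm(kp,kp)=kp$, so every factor $I_{t/\cdot}$ is $I_1$), and that the vector-space structure of Theorem \ref{t2.9} has scalar multiplication $c\left<A\right>=\left<cA\right>$ (compatibility of $\sim$ with scaling being immediate from $A\otimes I_s=B\otimes I_t\Rightarrow cA\otimes I_s=cB\otimes I_t$). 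Hence for each fixed $k$ the quotient map $\pi_k\colon{\cal M}_{kp\times kq}\to\Sigma_\mu$, $A\mapsto\left<A\right>$, is $\R$-linear.

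For the spanning statement, I would take an arbitrary $\left<A\right>\in\Sigma_\mu$ with $A\in{\cal M}_{kp\times kq}$, write $A=\sum_{i,j}a_{ij}E^{kp\times kq}_{ij}$ as an ordinary sum of matrix units, and apply $\pi_k$ to obtain $\left<A\right>=\sum_{i,j}a_{ij}\left<E^{kp\times kq}_{ij}\right>$, a finite linear combination of members of the set \eqref{2.11}. This shows \eqref{2.11} generates $\Sigma_\mu$. Because \eqref{2.11} is a countable union of finite sets it is countable, and a vector space admitting a countable spanning set has dimension at most $\aleph_0$.

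To get $\dim\Sigma_\mu=\infty$ I would show that each $\pi_k$ is injective: if $A,B\in{\cal M}_{kp\times kq}$ with $A\sim B$, then by Theorem \ref{t2.4} and Corollary \ref{c2.400} both equal $A_0\otimes I_{k/k_0}$, where $A_0$ is the common irreducible representative (of some size $k_0p\times k_0q$ with $k_0\mid k$), hence $A=B$. Injectivity of $\pi_k$ turns the standard basis $\{E^{kp\times kq}_{ij}\}$ of ${\cal M}_{kp\times kq}$ into $k^2pq$ linearly independent elements $\{\left<E^{kp\times kq}_{ij}\right>\}$ of $\Sigma_\mu$, all of which lie in \eqref{2.11}. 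Since $k$ is arbitrary, $\Sigma_\mu$ is not finite-dimensional, and together with the upper bound this forces $\dim\Sigma_\mu=\aleph_0$.

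The only step that is not pure bookkeeping is the injectivity of the size-slice maps $\pi_k$, which rests entirely on the uniqueness of the irreducible representative; I would also point out that \eqref{2.11}, although a generator, is highly redundant — for instance $\left<E^{p\times q}_{11}\right>=\left<E^{p\times q}_{11}\otimes I_2\right>$ and the latter representative is a \emph{sum} of two matrix units in ${\cal M}_{2p\times2q}$ — so carving an actual basis out of it is the separate task taken up next.
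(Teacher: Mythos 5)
Your proposal is correct, and it reaches the conclusion by a route that differs from the paper's in the half of the argument that actually requires care. The spanning and countability parts coincide with the paper's: you expand a representative into matrix units and push the ordinary sum through the quotient (the paper does the same, using the irreducible representative and Theorem \ref{t2.8}). For infinite-dimensionality, however, the paper argues by contradiction: if $\left(\Sigma_{\mu},\lplus\right)$ were finite-dimensional, a finite subfamily of \eqref{2.11} indexed by $k\le l$ would span, and then it asserts---with only the remark ``it is not difficult to obtain''---that $\left<E_{12}^{(l+1)p\times(l+1)q}\right>$ cannot be generated by the divisor-indexed family \eqref{2.12}, a non-generability claim that is left unproved and also silently uses that only sizes $kp\times kq$ with $k\mid(l+1)$ can contribute. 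You instead prove that each size-slice map $\pi_k\colon{\cal M}_{kp\times kq}\to\Sigma_{\mu}$ is linear and injective (injectivity resting, exactly as you say, on Theorem \ref{t2.4} and Corollary \ref{c2.400}: two equivalent matrices of the same size are both $A_0\otimes I_{k/k_0}$), so the $k^2pq$ classes $\left<E^{kp\times kq}_{ij}\right>$ are linearly independent for every $k$, which rules out finite dimension directly. This buys you a fully justified lower bound with no divisibility bookkeeping and no unproved step, and the injectivity of the $\pi_k$ is moreover the same mechanism that underlies the linear-independence arguments in Theorem \ref{t2.10}; the paper's route, when completed, gives essentially the same information but requires exactly the kind of ``which sizes can generate which'' analysis that your slice-map formulation packages cleanly. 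Your closing remark that \eqref{2.11} is redundant (e.g.\ $\left<E^{p\times q}_{11}\right>=\left<E^{p\times q}_{11}\otimes I_2\right>$) matches the paper's own observation following Proposition \ref{p2.5}.
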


\begin{proof}
	Arbitrarily chosen $\A$ in $\Sigma_{\mu}$ such that $A=(a_{ij})_{1\le i\le k_0p,1\le j\le k_0q}
	\in{\cal M}_{k_0p\times k_0q}$ 
	is the unique irreducible matrix of $\A$, we have
	$$\A=\mathop{\lplus}\limits_{\substack{1\le i\le k_0p\\1\le j\le k_0q}}
		a_{ij}\left<E_{ij}^{k_0p\times k_0q}\right>$$
	by Theorem \ref{t2.8}. Hence the set \eqref{2.11} of countably infinite cardinality  is a generator of the vector space 
	$\left(\Sigma_{\mu},\lplus\right)$, and the dimension of $\left(\Sigma_{\mu},\lplus\right)$
	is at most countably infinite.

	We claim that the dimension of $\left(\Sigma_{\mu},\lplus\right)$ is countably infinite.
	Suppose the contrary: $\left(\Sigma_{\mu},\lplus\right)$ is of finite dimension, then
	there exists a positive integer $l$ such that 
	$$\bigcup_{1\le k\le l}\bigcup_{\begin{subarray}{c}1\le i_{kp}\le kp\\1\le j_{kq}\le kq\end{subarray}}
		\left\{\left<E_{i_{kp}j_{kq}}
	^{kp\times kq}\right>\right\}$$ is a generator of
	$\left(\Sigma_{\mu},\lplus\right)$. Hence for all matrices $A$ in ${\cal M}_{(l+1)p\times (l+1)q}$,
	$\left<A\right>$ can be generated by 
	\begin{equation}\label{2.12}
		\bigcup_{\substack{k\le l\\k|(l+1)}}
		\bigcup_{\substack{1\le i_{kp}\le kp\\1\le j_{kq}\le kq}}
		\left\{\left<E_{i_{kp}j_{kq}}^{kp\times kq}\right>\right\}.
	\end{equation}

	It is not difficult to obtain that $\left<E_{12}^{(l+1)p\times(l+1)q}\right>$ cannot be
	generated by \eqref{2.12}, which is a contradiction, and hence completes the proof.
\end{proof}

Note that for each positive integer $l$, the subset
$$\bigcup_{k=l}^{\infty}\bigcup_{\begin{subarray}{c}1\le i_{kp}\le kp\\1\le j_{kq}\le kq\end{subarray}}
\left\{\left<E_{i_{kp}j_{kq}}^{kp\times kq}\right>\right\}$$ of \eqref{2.11} is
a generator of the vector space $\left(\Sigma_{\mu},\lplus\right)$, since
for all positive integers $k_1,k_2$ such that $k_1$ divides $k_2$, 
$$\bigcup_{\begin{subarray}{c}1\le i_{k_1p}\le k_1p\\1\le j_{k_1q}\le k_1q\end{subarray}}
\left\{\left<E_{i_{k_1p}j_{k_1q}}^{k_1p\times k_1q}\right>\right\}$$ is
generated by $$\bigcup_{\begin{subarray}{c}1\le i_{k_2p}\le k_2p\\1\le j_{k_2q}\le k_2q\end{subarray}}
\left\{\left<E_{i_{k_2p}j_{k_2q}}^{k_2p\times k_2q}\right>\right\}.$$
That is, \eqref{2.11} is not a basis of $\left(\Sigma_{\mu},\lplus\right)$.
Next we use \eqref{2.11} to construct a basis for 
the vector space $\left(\Sigma_{\mu},\lplus\right)$.

\begin{thm}\label{t2.10}
	For every positive rational number $\mu=p/q$, where $p,q\in\Z_{+}$ are co-prime,
	the following countably infinite set of equivalence classes 
	\begin{equation}\label{2.13}
		{\sf D}_{\mu}\cup{\sf N}_{\mu}
	\end{equation}
	is a basis for the vector space  $\left(\Sigma_{\mu},\lplus\right)$,
	where\footnote{
	Note that in \eqref{2.14}, $E^{p\times q}_{kl}\otimes E^{i\times i}_{jj}$
	is the unique irreducible element of $\left<E^{p\times q}_{kl}\otimes E^{i\times i}_{jj}\right>$, and
	$E^{p\times q}_{kl}\otimes E^{i\times i}_{j_1j_2}$ is the unique irreducible element 
	of $\left<E^{p\times q}_{kl}\otimes E^{i\times i}_{j_1j_2}\right>$.}
	\begin{equation}\label{2.14}
		\begin{split}
			&{\sf D}_{\mu}=\bigcup_{i=1}^{\infty}\bigcup_{\substack{1\le j\le i\\\gcd(i,j)=1}}
			\bigcup_{\substack{1\le k\le p\\1\le l\le q}}
			\left\{\left<E^{p\times q}_{kl}\otimes E^{i\times i}_{jj}\right>\right\},\\
			&{\sf N}_{\mu}=\bigcup_{i=2}^{\infty}\bigcup_{\substack{1\le j_1,j_2\le i\\j_1\ne j_2\\\gcd(i,j_1,j_2)=1}}
			\bigcup_{\substack{1\le k\le p\\1\le l\le q}}
			\left\{\left<E^{p\times q}_{kl}\otimes E^{i\times i}_{j_1j_2}\right>\right\},\\
		\end{split}
	\end{equation}
	where $\gcd()$ denotes the greatest common divisor of numbers in $()$.
\end{thm}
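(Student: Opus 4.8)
The plan is to identify $\mathsf{D}_\mu\cup\mathsf{N}_\mu$ with a natural subset of the generator of Proposition~\ref{p2.5}, show that this subset still spans, and then prove independence by attaching to every equivalence class a family of ``diagonal step functions'' that are invariant under the identity equivalence. Write $\mu=p/q$ with $p,q$ co-prime. The starting observation is purely combinatorial: for any indices $E^{p\times q}_{kl}\otimes E^{i\times i}_{j_1j_2}$ has a single nonzero entry, hence is irreducible, and $E^{p\times q}_{kl}\otimes E^{i\times i}_{j_1j_2}=E^{pi\times qi}_{(k-1)i+j_1,\,(l-1)i+j_2}$, while conversely every $E^{pi\times qi}_{ab}$ is of this form. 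Thus the generator~\eqref{2.11} of $(\Sigma_\mu,\lplus)$, reindexed, equals
$$\mathcal G:=\bigcup_{i\in\Z_{+}}\;\bigcup_{\substack{1\le k\le p,\;1\le l\le q\\ 1\le j_1,j_2\le i}}\left\{\left\langle E^{p\times q}_{kl}\otimes E^{i\times i}_{j_1j_2}\right\rangle\right\},$$
and $\mathsf{D}_\mu\cup\mathsf{N}_\mu$ is precisely the subset of $\mathcal G$ cut out by the extra requirement $\gcd(i,j_1,j_2)=1$. I also record that for $d\mid i$ and $1\le a,b\le i/d$ the matrix identity $E^{p\times q}_{kl}\otimes E^{(i/d)\times(i/d)}_{ab}\otimes I_d=\sum_{u=1}^{d}E^{p\times q}_{kl}\otimes E^{i\times i}_{(a-1)d+u,\,(b-1)d+u}$ holds, so that by \eqref{2.9} one has in $\Sigma_\mu$
\begin{equation}\label{starrel}
\left\langle E^{p\times q}_{kl}\otimes E^{(i/d)\times(i/d)}_{ab}\right\rangle=\mathop{\lplus}\limits_{u=1}^{d}\left\langle E^{p\times q}_{kl}\otimes E^{i\times i}_{(a-1)d+u,\,(b-1)d+u}\right\rangle.
\end{equation}

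For spanning it suffices, since $\mathcal G$ generates $(\Sigma_\mu,\lplus)$, to express each $\left\langle E^{p\times q}_{kl}\otimes E^{i\times i}_{j_1j_2}\right\rangle$ through $\mathsf{D}_\mu\cup\mathsf{N}_\mu$, which I would do by induction on $i$ and, for fixed $i$, by a secondary induction on $j_1$. If $\gcd(i,j_1,j_2)=1$ the class already lies in $\mathsf{D}_\mu\cup\mathsf{N}_\mu$ (this covers the bases $i=1$ and $j_1=1$). If $d:=\gcd(i,j_1,j_2)>1$, then $d\mid j_1$ and $d\mid j_2$, and solving \eqref{starrel} with $a=j_1/d$, $b=j_2/d$ for its $u=d$ summand writes the class as a combination of $\left\langle E^{p\times q}_{kl}\otimes E^{(i/d)\times(i/d)}_{j_1/d,\,j_2/d}\right\rangle$, available by the induction on $i$, and of the classes $\left\langle E^{p\times q}_{kl}\otimes E^{i\times i}_{j_1-d+u,\,j_2-d+u}\right\rangle$ for $1\le u\le d-1$, each with first lower index $j_1-d+u<j_1$, available by the secondary induction. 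This closes the induction.

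For linear independence I would first reduce to $\mu=1$. Given a finite vanishing combination $\mathop{\lplus}_\alpha c_\alpha\left\langle E^{p\times q}_{k_\alpha l_\alpha}\otimes E^{i_\alpha\times i_\alpha}_{j_1^\alpha j_2^\alpha}\right\rangle=0$ with all $\gcd(i_\alpha,j_1^\alpha,j_2^\alpha)=1$, lift every term to an $Np\times Nq$ representative, $N:=\lcm_\alpha i_\alpha$; since the zero class of $\Sigma_\mu$ is represented only by zero matrices, the lift $\sum_\alpha c_\alpha\,E^{p\times q}_{k_\alpha l_\alpha}\otimes\bigl(E^{i_\alpha\times i_\alpha}_{j_1^\alpha j_2^\alpha}\otimes I_{N/i_\alpha}\bigr)$ is the zero matrix, and reading off its $(k,l)$-block shows that for each $(k,l)$ one has $\sum_{\alpha:(k_\alpha,l_\alpha)=(k,l)}c_\alpha\langle E^{i_\alpha\times i_\alpha}_{j_1^\alpha j_2^\alpha}\rangle=0$ in $\Sigma_1$. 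So it is enough to prove that $\bigl\{\langle E^{i\times i}_{j_1j_2}\rangle:\gcd(i,j_1,j_2)=1\bigr\}$ is independent in $\Sigma_1$. To that end, for a rational $\rho\in[0,1)$ and $v\in\Sigma_1$ I define a step function $G^{(\rho)}_v$ on $(0,1-\rho]$ by choosing a representative $A\in\mathcal M_{i\times i}$ of $v$ with $\rho i\in\Z$ (enlarge $i$ if needed), setting $\delta=\rho i$, and putting $G^{(\rho)}_v(x)=A_{\lceil ix\rceil+\delta,\;\lceil ix\rceil}$; the identity $(A\otimes I_s)_{\lceil isx\rceil+\delta s,\;\lceil isx\rceil}=A_{\lceil ix\rceil+\delta,\;\lceil ix\rceil}$ shows $G^{(\rho)}_v$ is independent of the representative (any two representatives of $v$ agree after tensoring with suitable identities) and that $v\mapsto G^{(\rho)}_v$ is linear.

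On the generator one computes $G^{(\rho)}_{\langle E^{i\times i}_{j_1j_2}\rangle}=\mathbf 1_{((j_2-1)/i,\,j_2/i]}$ when $\rho=(j_1-j_2)/i\ge 0$, and $0$ otherwise (the sub-diagonal case $j_1<j_2$ is reduced to this by transposition, which is an automorphism of $\Sigma_1$). Applying $G^{(\rho)}$ to a supposed relation $\sum_\alpha c_\alpha\langle E^{i_\alpha\times i_\alpha}_{j_1^\alpha j_2^\alpha}\rangle=0$ leaves, for each fixed $\rho\ge 0$, the identity $\sum_{\alpha:(j_1^\alpha-j_2^\alpha)/i_\alpha=\rho}c_\alpha\,\mathbf 1_{((j_2^\alpha-1)/i_\alpha,\,j_2^\alpha/i_\alpha]}=0$. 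The decisive point is that the right endpoints $j_2^\alpha/i_\alpha$ occurring here are pairwise distinct: the pair $\bigl(j_1^\alpha/i_\alpha,\,j_2^\alpha/i_\alpha\bigr)=\bigl(\rho+j_2^\alpha/i_\alpha,\,j_2^\alpha/i_\alpha\bigr)$ determines $i_\alpha$ as the least common multiple of the denominators of its coordinates --- a larger value of $i_\alpha$ would force $\gcd(i_\alpha,j_1^\alpha,j_2^\alpha)>1$ --- hence determines $\alpha$. Extending the indicators by $0$ to all of $\R$ and inspecting the jump of $\sum c_\alpha\mathbf 1$ at the greatest right endpoint carrying a nonzero coefficient, only a single indicator contributes, forcing that coefficient to vanish; ranging over all $\rho$ then gives $c_\alpha=0$ for every $\alpha$. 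I expect the crux to be the construction of the invariants $G^{(\rho)}$ and the verification that they descend to $\Sigma_1$, together with the endpoint-distinctness observation that makes the elementary jump argument conclusive; the spanning step and the reduction to $\mu=1$ are routine bookkeeping.
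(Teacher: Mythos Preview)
Your argument is correct. For spanning, both you and the paper use the same relation \eqref{starrel} between levels $i/d$ and $i$, but your double induction on $(i,j_1)$ is cleaner than the paper's explicit gcd-ladder decomposition $j_1=f_1+\cdots+f_{m'}$, $j_1-1=g_1+\cdots+g_m$; the two are doing the same telescoping. For independence the approaches diverge more. The paper lifts all the classes to a common size $t=\lcm(1,\dots,n)$, splits into the diagonal and off-diagonal families $\mathsf D_\mu$, $\mathsf N_\mu$, and observes that in $E^{i\times i}_{j_1j_2}\otimes I_{t/i}$ the entry at position $(j_1t/i,\,j_2t/i)$ is the ``last'' one; since these positions are pairwise distinct (the paper checks this case by case), the lifted matrices are triangular with respect to them. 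You instead build, for each rational offset $\rho$, a linear functional $G^{(\rho)}$ on $\Sigma_1$ reading a super-diagonal of a representative, and then run a jump argument on the resulting indicator functions. Both proofs rest on the same arithmetic fact---$\gcd(i,j_1,j_2)=1$ forces the pair $(j_1/i,j_2/i)$ to determine $(i,j_1,j_2)$---but your packaging is more conceptual (invariants that manifestly descend to the quotient) while the paper's is more hands-on with matrix entries; the paper's version avoids the transposition bookkeeping for the sub-diagonal case and the domain checks for $G^{(\rho)}$, whereas yours treats all $\rho$ uniformly and makes the reduction to $\mu=1$ explicit.
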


\begin{proof}
	To prove this theorem, we need to show that \romannumeral1) \eqref{2.13} is a generator of
	$\left(\Sigma_{\mu},\lplus\right)$ and \romannumeral2)
	every finite number of elements of \eqref{2.13} are linearly independent.
	Fix co-prime positive integers $p$ and $q$.

	\romannumeral1): We next prove that for all positive integers $i,k,l,j_1,j_2$ satisfying 
	$1\le k\le p$, $1\le l\le q$ and $1\le j_1,j_2\le i$,
	$\left<E^{p\times q}_{kl}\otimes E^{i\times i}_{j_1j_2}\right>$ is generated by \eqref{2.13}.
		We divide the proof into two cases that $j_1=j_2=:j$ and $j_1\ne j_2$.

	$j_1=j_2$:
	
	If $\gcd(i,j)=1$, then $\left<E^{p\times q}_{kl}\otimes E^{i\times i}_{jj}\right>\in
	{\sf D}_{\mu}$, and $\left<E^{p\times q}_{kl}\otimes E^{i\times i}_{jj}\right>$ is generated by 
	\eqref{2.13}. Next we assume that $\gcd(i,j)>1$. Write 
	$$
	\begin{array}[]{l}
		f_1+f_2+\cdots+f_{m'}=j,\\
		g_1+g_2+\cdots+g_m=j-1,\\
	\end{array}
	$$
	where
	\begin{equation}
		\begin{split}
			&f_1 = \gcd(i,j),\\
			&f_2 = \gcd(i,j-f_1),\\
			&\vdots\\
			&f_{m'} = \gcd\left(i,j-\sum\nolimits_{\a=1}^{m'}f_{\a}\right),\\
			&g_1 = \gcd(i,j-1),\\
			&g_2 = \gcd(i,j-1-g_1),\\
			&\vdots\\
			&g_{m} = \gcd\left(i,j-1-\sum\nolimits_{\a=1}^{m-1}g_{\a}\right).\\
		\end{split}
	\end{equation}
	We have
	\begin{equation}
		\begin{split}
			E^{i\times i}_{jj} =
			\left(\mathop{\lplus}_{n'=1}^{m'}E^{\bar i'_{n'}\times \bar i'_{n'}}_{\bar j'_{n'}\bar j'_{n'}}\right)
			\lminus
			\left(\mathop{\lplus}_{n=1}^{m}E^{\bar i_n\times \bar i_n}_{\bar j_n\bar j_n}\right),
		\end{split}
	\end{equation}
	where $\bar i'_{n'}=i/f_{n'}$, $\bar j'_{n'}=(j-\sum_{\a=1}^{n'-1}f_{\a}
	)/f_{n'}$, $n'=1,2,\cdots,m'$;
	$\bar i_{n}=i/g_{n}$, $\bar j_{n}=(j-1-\sum_{\a=1}^{n-1}g_{\a})/g_{n}$, $n=1,2,\cdots,m$.

	Then $\bar j'_{n'}\le\bar i'_{n'}$, $\gcd(\bar i'_{n'},\bar j'_{n'})=1$,
	$\bar j_n\le\bar i_n$, $\gcd(\bar i_n,\bar j_n)=1$,
	$\left<E^{p\times q}_{kl}\otimes E^{\bar i'_{n'}\times \bar i'_{n'}}_{\bar j'_{n'}\bar j'_{n'}}\right>\in{\sf D}_{\mu}$,
	$\left<E^{p\times q}_{kl}\otimes E^{\bar i_n\times \bar i_n}_{\bar j_n\bar j_n}\right>\in{\sf D}_{\mu}$,
	$k=1,2,\cdots,p$, $l=1,2,\cdots,q$, $n'=1,2,\cdots,m'$, $n=1,2,\cdots,m$, and by Theorem \ref{t2.8},
	\begin{equation}\label{2.21}
		\begin{split}
			\left<E^{p\times q}_{kl}\otimes E^{i\times i}_{jj}\right>
			=
			&\left(\mathop{\lplus}_{n'=1}^{m'}\left<E^{p\times q}_{kl}\otimes E^{\bar i'_{n'}\times \bar i'_{n'}}_{\bar j'_{n'}\bar j'_{n'}}\right>\right)
			\lminus\\
			&\left(\mathop{\lplus}_{n=1}^{m}\left<E^{p\times q}_{kl}\otimes E^{\bar i_n\times \bar i_n}
			_{\bar j_n\bar j_n}\right>\right)\\
		\end{split}
	\end{equation}
	is generated by ${\sf D}_{\mu}$.

	$j_1\ne j_2$:

	Similar to the former case, if $\gcd(i,j_1,j_2)=1$, then
	$\left<E^{p\times q}_{kl}\otimes E^{i\times i}_{j_1j_2}\right>\in
	{\sf N}_{\mu}$, and $\left<E^{p\times q}_{kl}\otimes E^{i\times i}_{j_1j_2}\right>$ is generated by 
	\eqref{2.13}. Next we consider $\gcd(i,j_1,j_2)>1$, and assume that $j_1<j_2$ without loss of generality. Write 
	$$
	\begin{array}[]{l}
		f_1+f_2+\cdots+f_{m'}=j_1,\\
		g_1+g_2+\cdots+g_m=j_1-1,\\
	\end{array}
	$$
	where
	\begin{equation}
		\begin{split}
			&f_1 = \gcd(i,j_1,j_2),\\
			&f_2 = \gcd(i,j_1-f_1,j_2-f_1),\\
			&\vdots\\
			&f_{m'} = \gcd\left(i,j_1-\sum\nolimits_{\a=1}^{m'}f_{\a},j_2-\sum\nolimits_{\a=1}^{m'}f_{\a}\right),\\
			&g_1 = \gcd(i,j_1-1,j_2-1),\\
			&g_2 = \gcd(i,j_1-1-g_1,j_2-1-g_1),\\
			&\vdots\\
			&g_{m} = \gcd\left(i,j_1-1-\sum\nolimits_{\a=1}^{m-1}g_{\a},j_2-1-\sum\nolimits_{\a=1}^{m-1}g_{\a}\right).\\
		\end{split}
	\end{equation}


	We have
	\begin{equation}
		\begin{split}
			E^{i\times i}_{j_1j_2} =
			\left(\mathop{\lplus}_{n'=1}^{m'}E^{\bar i'_{n'}\times \bar i'_{n'}}_{\bar j'_{1n'}\bar j'_{2n'}}\right)
			\lminus
			\left(\mathop{\lplus}_{n=1}^{m}E^{\bar i_n\times \bar i_n}_{\bar j_{1n}\bar j_{1n}}\right),
		\end{split}
	\end{equation}
	where $\bar i'_{n'}=i/f_{n'}$, $\bar j'_{1n'}=(j_1-\sum_{\a=1}^{n'-1}f_{\a})/f_{n'}$,
	$\bar j'_{2n'}=(j_2-\sum_{\a=1}^{n'-1}f_{\a})/f_{n'}$,
	$n'=1,2,\cdots,m'$;
	$\bar i_{n}=i/g_{n}$, $\bar j_{1n}=(j_1-1-\sum_{\a=1}^{n-1}g_{\a})/g_{n}$,
	$\bar j_{2n}=(j_2-1-\sum_{\a=1}^{n-1}g_{\a})/g_{n}$, $n=1,2,\cdots,m$.

	Then $\bar j'_{1n'},\bar j'_{2n'}\le\bar i'_{n'}$, $\gcd(\bar i'_{n'},\bar j'_{1n'},\bar j'_{2n'})=1$,
	$\bar j_{1n},\bar j_{2n}\le\bar i_n$, $\gcd(\bar i_n,\bar j_{1n},\bar j_{2n})=1$,
	$\left<E^{p\times q}_{kl}\otimes E^{\bar i'_{n'}\times \bar i'_{n'}}_{\bar j'_{1n'}\bar j'_{2n'}}\right>\in{\sf N}_{\mu}$,
	$\left<E^{p\times q}_{kl}\otimes E^{\bar i_n\times \bar i_n}_{\bar j_{1n}\bar j_{2n}}\right>\in{\sf N}_{\mu}$,
	$k=1,2,\cdots,p$, $l=1,2,\cdots,q$, $n'=1,2,\cdots,m'$, $n=1,2,\cdots,m$, and by Theorem \ref{t2.8},
	\begin{equation}\label{2.22}
		\begin{split}
			\left<E^{p\times q}_{kl}\otimes E^{i\times i}_{j_1j_2}\right>
			=
			&\left(\mathop{\lplus}_{n'=1}^{m'}\left<E^{p\times q}_{kl}\otimes E^{\bar i'_{n'}\times \bar i'_{n'}}_{\bar j'_{1n'}\bar j'_{2n'}}\right>\right)
			\lminus\\
			&\left(\mathop{\lplus}_{n=1}^{m}\left<E^{p\times q}_{kl}\otimes E^{\bar i_n\times \bar i_n}
			_{\bar j_{1n}\bar j_{2n}}\right>\right)\\
		\end{split}
	\end{equation}
	is generated by ${\sf N}_{\mu}$.

	Based on the above, we have \eqref{2.11} is generated by \eqref{2.13}, then by Proposition 
	\ref{p2.5}, \eqref{2.13} is a generator of $\left(\Sigma_{\mu},\lplus\right)$.
	Besides, by \eqref{2.21} and \eqref{2.22} we have every element of $\left(\Sigma_{\mu},\lplus\right)$
	is represented uniquely as a linear combination of finitely many elements of \eqref{2.13}.

	\romannumeral2): To prove that every finite number of elements of \eqref{2.13} are linearly independent,
	we need to prove that every finite number of elements of ${\sf D}_{\mu}$ are linearly independent
	and every finite number of elements of ${\sf N}_{\mu}$ are linearly independent, because 
	in each $E^{i\times i}_{jj}$, only diagonal entries can be nonzero, and in each $E^{i\times i}_{j_1j_2}$
	with $j_1\ne j_2$, only nondiagonal entries can be nonzero.

	To prove that every finite number of elements of ${\sf D}_{\mu}$ are linearly independent, we need to prove
	that for all positive integers $k,l,n$ satisfying that $1\le k\le p$ and $1\le l\le q$,
	\begin{equation}\label{2.15}
		\bigcup_{i=1}^{n}\bigcup_{\substack{1\le j\le i\\\gcd(i,j)=1}}
		\left\{\left<E^{p\times q}_{kl}\otimes E^{i\times i}_{jj}\right>\right\}
	\end{equation}
	are linearly independent. Write $\lcm(1,2,\cdots,n)=:t$, then \eqref{2.15} equals
	\begin{equation}\label{2.16}
		\bigcup_{i=1}^{n}\bigcup_{\substack{1\le j\le i\\\gcd(i,j)=1}}
		\left\{\left<E^{p\times q}_{kl}\otimes E^{i\times i}_{jj}\otimes I_{t/i}\right>\right\}.
	\end{equation}
	
	What is left is to prove the set
	\begin{equation}\label{2.17}
		\left\{jt/i|1\le i\le n,1\le j\le i,\gcd(i,j)=1\right\} 
	\end{equation}
	has the same cardinality as \eqref{2.16}, because for each 
	$E^{i\times i}_{jj}\otimes I_{t/i}$, the $(jt/i,jt/i)$-th entry equals $1$,
	and either $jt/i=t$ or the $(s,s)$-th entry equals $0$ for each ineger $s>jt/i$.

	Suppose there exist positive integers $i_1,j_1,i_2,j_2$ such that 
	$1\le j_1\le i_1$, $1\le j_2\le i_2$, $\gcd(i_1,j_1)=\gcd(i_2,j_2)=1$, 
	and $j_1t/i_1=j_2t/i_2$, then $i_1=i_2$ and $j_1=j_2$. Hence the set \eqref{2.17}
	has the same cardinality as the set \eqref{2.16}.

	To prove that every finite number of elements of ${\sf N}_{\mu}$ are linearly independent,
	similar to the case for ${\sf D}_{\mu}$, we need to prove
	that for all positive integers $k,l,n$ satisfying that $n\ge 2$, $1\le k\le p$ and $1\le l\le q$,
	\begin{equation}\label{2.18}
		\bigcup_{i=2}^{n}\bigcup_{\substack{1\le j_1,j_2\le i\\j_1\ne j_2\\\gcd(i,j_1,j_2)=1}}
		\left\{\left<E^{p\times q}_{kl}\otimes E^{i\times i}_{j_1j_2}\right>\right\}
	\end{equation}
	are linearly independent. Write $\lcm(2,\cdots,n)=:t$, then \eqref{2.18} equals
	\begin{equation}\label{2.19}
		\bigcup_{i=2}^{n}\bigcup_{\substack{1\le j_1,j_2\le i\\j_1\ne j_2\\\gcd(i,j_1,j_2)=1}}
		\left\{\left<E^{p\times q}_{kl}\otimes E^{i\times i}_{j_1j_2}\otimes I_{t/i}\right>\right\}.
	\end{equation}
	
	What is left is to prove the set
	\begin{equation}\label{2.20}
		\begin{split}
			\{(j_1t/i,j_2t/i)| &1\le i\le n,1\le j_1,j_2\le i,j_1\ne j_2,\\
			&\gcd(i,j_1,j_2)=1\} 
		\end{split}
	\end{equation}
	has the same cardinality as \eqref{2.19}, because for each
	$E^{i\times i}_{j_1j_2}\otimes I_{t/i}$, the
	$(j_1t/i,j_2t/i)$-th entry equals $1$, and either $j_1t/i=t$, or $j_2t/i=t$,
	or the $(j_1t/i+s,j_2t/i+s)$-th entry equals $0$ for each positive integer $s$.

	Suppose there exist positive integers $i_1,j_1^1,j_2^1,i_2,j_1^2,j_2^2$ such that 
	$1\le j_1^1,j_2^1\le i_1$, $1\le j_1^2,j_2^2\le i_2$, $j_1^1\ne j_2^1$, $j_1^2\ne j_2^2$,
	$\gcd(i_1,j_1^1,j_2^1)=\gcd(i_2,j_1^2,j_2^2)=1$, 
	$j_1^1t/i_1=j_1^2t/i_2$, and $j_2^1t/i_1=j_2^2t/i_2$, then $i_1=i_2$, $j_1^1=j_1^2$ and $j_2^1=j_2^2$.
	This follows from the discussion below.
	\romannumeral1) Suppose $\gcd(i_1,j_1^1)=1$ and $i_1\ne i_2$, then $i_1|i_2$, $i_2/i_1=:t>1$, and 
	$\gcd(i_2,j_1^2,j_2^2)=\gcd(ti_1,tj_1^1,tj_2^1)=t>1$, which is a contradiction. 
	\romannumeral2) Suppose $\gcd(i_1,j_1^1)>1$, $i_1\ne i_2$ and $\gcd(i_2,j_1^2)=1$, then similarly 
	$i_2|i_1$, and $\gcd(i_1,j_1^1,j_2^1)=i_1/i_2>1$, which is also a contradiction.
	\romannumeral3) Suppose $\gcd(i_1,j_1^1)=:t>1$, $i_1\ne i_2$ and $\gcd(i_2,j_1^2)=:s>1$,
	then $\gcd(i_1/t,j_1^1/t)=\gcd(i_2/s,j_1^2/s)=1$, $i_1/t=i_2/s$, $\gcd(t,s)|j_2^1$, $\gcd(t,s)|j_2^2$, 
	$t\ne s$ (if $t=s$, then $i_1=i_2$, a contradiction),
	$\gcd(i_1,j_1^1,j_2^1)\ge t/\gcd(t,s)>1$ or $\gcd(i_2,j_1^2,j_2^2)\ge s/\gcd(t,s)>1$,
	which is again a contradiction.
	Hence the set \eqref{2.20} has the same cardinality as the set \eqref{2.19}, which completes the proof.
\end{proof}

\section{Inner product for the quotient space}

We have shown a basis for the quotient space $(\Sigma_{\mu},\lplus)$, where $\mu\in\Q_{+}$.
Next we endow the quotient space with an inner product which is a generalization of the conventional inner product
of matrices. For ${\cal M}_{m\times n}$, the conventional inner product is defined as
for all matrices $A=(a_{ij})_{m\times n},B=(b_{ij})_{m\times n}$ both in
${\cal M}_{m\times n}$,
$\left<A,B\right>:=\sum_{1\le i\le m,1\le j\le n}a_{ij}b_{ij}$.

The inner product for $(\Sigma_{\mu},\lplus)$ is defined as for all $\A,\B\in\Sigma_{\mu}$,
where $A\in{\cal M}_{\mu p\times p},B\in{\cal M}_{\mu q\times q}$ are the unique irreducible elements of $\A,\B$,
respectively, 
\begin{equation}
	\left<\A,\B\right>:=\left<A\otimes I_{t/(\mu p)},B\otimes I_{t/(\mu q)}\right>,
	\label{eqn_innerproduct}
\end{equation}
where $t=\lcm(\mu p,\mu q)$.

The norm induced by the inner product \eqref{eqn_innerproduct} is defined as
for all $\A\in\Sigma_{\mu}$,
\begin{equation}
	\|\A\|:=\sqrt{\left<\A,\A\right>}.
	\label{eqn_norm}
\end{equation}

The metric induced by the norm \eqref{eqn_norm} is defined as
for all $\A,\B\in\Sigma_{\mu}$,
\begin{equation}
	\text{d}(\A,\B):=\|\A\lminus\B\|.
	\label{eqn_metric}
\end{equation}

Next for the inner product space $(\Sigma_{\mu},\left<\cdot,\cdot\right>)$, where $\mu\in\Q_{+}$,
we give a Cauchy sequence
that does not converge, showing that this inner product space is not a Hilbert space.
 
Defined a set of mappings $\d_{n}:{\cal M}\to {\cal M}$ as for all $A=(a_{ij})_{p\times q}\in{\cal M}_{p\times q}$,
$\d_n(A)=(b_{ij})_{p\times q}$, where 
\begin{align}
	b_{ij} = 
	\left\{
	\begin{array}[]{ll}
		a_{ij}, &\text{ if }a_{ij}\ne 0,\\
		\frac{1}{2^{\frac{2^{n-1}}{\ln 2}}}, &\text{ otherwise.}
	\end{array}
	\right.
	\label{}
\end{align}

Consider the following sequence
\begin{equation}
	\left<A_1\right>,\left<A_2\right>,\dots\in\Sigma_{\mu},
	\label{eqn:Cauchyseq.}
\end{equation}
where $\mu\in\Q_{+}$, all entries of  $A_1\in{\cal M}_{\mu}$ are nonzero, 
for all positive integers $n>1$, $A_n=\d_n(A_{n-1}\otimes I_2)$.
Then for each $n\in\Z_{+}$, $A_n$ is the unique irreducible element of $\left<A_n\right>$.

For example, if $\mu=1/2$ and $$A_1=\begin{bmatrix}1& 2\end{bmatrix},$$ then
	\begin{align*}
&A_2=
\begin{bmatrix}
	1 & \frac{1}{2^{\frac{2}{\ln 2}}} & 2 & \frac{1}{2^{\frac{2}{\ln 2}}}\\
	\frac{1}{2^{\frac{2}{\ln 2}}} & 1 & \frac{1}{2^{\frac{2}{\ln 2}}} & 2
\end{bmatrix},\\
&A_3=
\begin{bmatrix}
	1 & \frac{1}{2^{\frac{2^2}{\ln 2}}} & \frac{1}{2^{\frac{2}{\ln 2}}} & \frac{1}{2^{\frac{2^2}{\ln 2}}} & 2 & \frac{1}{2^{\frac{2^2}{\ln 2}}} & \frac{1}{2^{\frac{2}{\ln 2}}} & \frac{1}{2^{\frac{2^2}{\ln 2}}} \\
	\frac{1}{2^{\frac{2^2}{\ln 2}}} & 1 & \frac{1}{2^{\frac{2^2}{\ln 2}}} & \frac{1}{2^{\frac{2}{\ln 2}}} & \frac{1}{2^{\frac{2^2}{\ln 2}}} & 2 & \frac{1}{2^{\frac{2^2}{\ln 2}}} & \frac{1}{2^{\frac{2}{\ln 2}}} \\
	\frac{1}{2^{\frac{2}{\ln 2}}} & \frac{1}{2^{\frac{2^2}{\ln 2}}} & 1 & \frac{1}{2^{\frac{2^2}{\ln 2}}} & \frac{1}{2^{\frac{2}{\ln 2}}} & \frac{1}{2^{\frac{2^2}{\ln 2}}} & 2 & \frac{1}{2^{\frac{2^2}{\ln 2}}}\\
	\frac{1}{2^{\frac{2^2}{\ln 2}}} & \frac{1}{2^{\frac{2}{\ln 2}}} & \frac{1}{2^{\frac{2^2}{\ln 2}}} & 1 & \frac{1}{2^{\frac{2^2}{\ln 2}}} & \frac{1}{2^{\frac{2}{\ln 2}}} & \frac{1}{2^{\frac{2^2}{\ln 2}}} & 2\\
\end{bmatrix},\\
&\cdots.
\end{align*}

\begin{prp}\label{thm_Cauchyseq.}
	The sequence $\left<A_1\right>,\left<A_2\right>,\dots\in\Sigma_{\mu}$ defined in \eqref{eqn:Cauchyseq.}
	is a Cauchy sequence that does not converge.
\end{prp}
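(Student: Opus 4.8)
The plan is to prove the two assertions separately: first that the sequence $\left<A_n\right>$ is Cauchy in the metric \eqref{eqn_metric}, and second that it has no limit in $\Sigma_\mu$.

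For the Cauchy property, I would compute $\text{d}(\left<A_m\right>,\left<A_n\right>)$ for $n>m$ directly. Since each $A_n$ is the unique irreducible element of its class and has $2^{n-1}$ times as many rows and columns as $A_1$ (up to the fixed shape of $A_1$), the inner product \eqref{eqn_innerproduct} with common refinement $I_{t/\cdot}$ amounts to comparing $A_m\otimes I_{2^{n-m}}$ with $A_n$. The key structural observation is that passing from $A_{n-1}\otimes I_2$ to $A_n=\d_n(A_{n-1}\otimes I_2)$ only changes entries that were zero, replacing each such zero by the constant $c_n:=1/2^{2^{n-1}/\ln 2}$, while the nonzero entries of $A_{n-1}\otimes I_2$ (hence of all further tensorings) are left untouched. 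Therefore $A_m\otimes I_{2^{n-m}}$ and $A_n$ differ only in entries where the constant $c_k$ (for various $m<k\le n$) was inserted, and at any such position the squared difference is of the form $(c_k - c_{k'})^2$ or $c_k^2$, each bounded by a term of order $c_{m+1}^2$. Counting: the number of matrix positions in $A_n$ grows like $4^{n-1}$ times a constant, but $c_k^2 = 1/2^{2^{k}/\ln 2} = e^{-2^{k}}$ decays doubly-exponentially, so $4^{n}\,c_{m+1}^2 \to 0$ as $m\to\infty$ uniformly in $n$. Hence $\text{d}(\left<A_m\right>,\left<A_n\right>)^2 \le (\text{number of entries of }A_n)\cdot O(c_{m+1}^2) \to 0$, giving the Cauchy property. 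I would make the doubly-exponential bookkeeping precise but not belabor it; the point is that the doubly-exponential decay of $c_k^2$ crushes the merely exponential growth of the matrix size.

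For non-convergence, suppose $\left<A_n\right> \to \left<B\right>$ for some $B\in\mathcal M_\mu$, with $B_0$ the irreducible representative of $\left<B\right>$. Since $\Sigma_\mu$ with the metric \eqref{eqn_metric} would then contain the limit, and each $A_n$ is irreducible of strictly increasing size, I would argue that the limit class cannot have a finite irreducible representative: fix the size $p\times q$ of $B_0$; for $n$ large, $A_n$ has size a proper multiple of $p\times q$ (once $2^{n-1}$ exceeds the relevant factor), so $\text{d}(\left<A_n\right>,\left<B\right>)$ is computed by comparing $A_n$ with $B_0\otimes I_{r_n}$ on a common refinement. But every entry of $A_n$ that comes from an originally-zero slot equals some $c_k>0$, and the pattern of these "filled" positions in $A_n$ is not periodic with any fixed period $p\times q$ — indeed the set of filled positions changes at every stage $n$ and, crucially, $A_n$ contains filled entries with value $c_n$, which differ from all earlier values $c_1,\dots,c_{n-1}$ and from the entries of any fixed $B_0\otimes I_{r_n}$. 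Quantitatively, in $A_n$ there are at least $\Omega(4^{n-1})$ positions holding the value $c_n$ (the newly-filled zeros), and for these to be matched by $B_0\otimes I_{r_n}$ one would need $B_0$ itself to contain the value $c_n$ in a positive fraction of its entries — impossible for a fixed finite $B_0$ once $n$ is large, since $c_n\to 0$ and is distinct for each $n$. More carefully, I would lower-bound $\text{d}(\left<A_n\right>,\left<B\right>)^2$ by the contribution of positions where $A_n$ has value $c_n$ but $B_0\otimes I_{r_n}$ has a fixed entry of $B_0$ not equal to $c_n$; this contribution does not tend to $0$, contradicting convergence.

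The main obstacle is the second part: making rigorous the claim that no fixed finite matrix $B_0$ can be the irreducible representative of the limit. The cleanest route is to exploit that the metric \eqref{eqn_metric} is \emph{normalized} by the common refinement size (the inner product divides, in effect, by the number of entries since $t = \lcm(\mu p,\mu q)$ and one compares on that size), so I must check whether the relevant normalization makes the "filled" contribution vanish or survive — and verify that the particular choice $c_n = 1/2^{2^{n-1}/\ln 2} = e^{-2^{n-1}}$ is exactly tuned so that $(\text{filled count in }A_n)\cdot c_n^2$ stays bounded away from $0$ while $(\text{filled count})\cdot c_{m+1}^2$ with the older, larger block still tends to $0$. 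In other words, the delicate point is that the same doubly-exponential sequence must simultaneously be summable-enough for the Cauchy estimate and large-enough (relative to the $4^n$ growth in the unnormalized inner product) to block convergence; I would verify this by a direct comparison of the exponents $2^{n-1}$ against $\log 4^{n} = 2n\log 2$, which clearly favors non-convergence of the tail built from the freshest constant $c_n$ against any fixed target.
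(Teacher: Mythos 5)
Your Cauchy argument collapses at its final step. Bounding every differing entry of $A_m\otimes I_{2^{n-m}}$ versus $A_n$ by the single largest constant $c_{m+1}$ and multiplying by the total number of entries of $A_n$ gives $\mathrm{d}(\langle A_m\rangle,\langle A_n\rangle)^2\le 4^{n-1}pq\,c_{m+1}^2$, and this does \emph{not} tend to $0$ as $m\to\infty$ uniformly in $n$: for fixed $m$ it diverges as $n\to\infty$, because $4^{n-1}$ grows with $n$ while $c_{m+1}$ is a fixed constant; the assertion ``$4^n c_{m+1}^2\to 0$ uniformly in $n$'' is false. Even the refined count does not rescue the two-index comparison: the entries filled at stage $k$ number $2^{n+k-3}pq$ and carry weight $c_k^2=e^{-2^k}$, so $\|A_m\otimes I_{2^{n-m}}-A_n\|_F^2=\sum_{k=m+1}^{n}2^{n+k-3}pq\,e^{-2^k}$, whose $k=m+1$ term still grows like $2^{n}$ for fixed $m$. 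Note also that your premise that the inner product is ``normalized by the common refinement size'' contradicts \eqref{eqn_innerproduct}: it is the plain Frobenius pairing of the tensored irreducible representatives, with no division by the size. The paper sidesteps exactly this difficulty by estimating only \emph{consecutive} distances, $\mathrm{d}(\langle A_n\rangle,\langle A_{n+1}\rangle)=\sqrt{2^{2n-1}pq}\,c_{n+1}\le\sqrt{pq\,2^{-1-2/\ln 2}}\,a^{n^2}$ with $a=2^{-(\ln 2)/2}<1$, and then summing via the triangle inequality; your direct estimate replaces this with an inequality that fails.

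The non-convergence half has a parallel quantitative error. Your plan hinges on the claim that the freshest fills block convergence, i.e.\ that (number of entries of $A_n$ equal to $c_n$)$\cdot c_n^2$ stays bounded away from $0$; in fact there are $\Theta(4^{n})$ such positions and $c_n^2=e^{-2^{n}}$, so this product tends to $0$ doubly-exponentially --- your closing comparison of the exponents $2^{n-1}$ versus $2n\log 2$ points in the opposite direction from what you assert. Hence the proposed lower bound on $\mathrm{d}(\langle A_n\rangle,\langle B\rangle)$ evaporates, and nothing in your sketch handles a candidate limit that equals some $\langle A_m\rangle$ or whose irreducible representative has the same size as some $A_n$. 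The paper's argument is of a different nature and never uses the magnitude of the newest constant: fixing the irreducible representative $A_0$ of a candidate limit, it splits into three cases --- (i) $\langle A_0\rangle=\langle A_m\rangle$, where the distances to later terms increase and are bounded below by $c_{m+1}$; (ii) $A_0$ has the same size as some $A_n$ but differs from it, where one fixed nonzero entry-difference persists in all later comparisons (all entries of every $A_n$ are nonzero and are preserved by the construction); (iii) the size of $A_0$ never matches, where counting nonzero entries ($\sim 4^{n}$ for $A_n\otimes I_s$ against at most $\sim 2^{n}$ for $A_0\otimes I_{2^{n-1}p}$) produces a position at which $A_n$ is nonzero while the tensored $A_0$ vanishes, giving a positive lower bound that persists for all later indices. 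To repair your sketch you would need essentially this case analysis (persistence of a fixed difference plus a zero-pattern counting argument), not a magnitude estimate built from $c_n$.
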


\begin{proof}
	Let $A_1$ be of size $p\times q$. Then for each $n\in\Z_{+}$,
	$A_n$ is of size $(2^{n-1}p)\times (2^{n-1}q)$.

	We first prove that the sequence \eqref{eqn:Cauchyseq.} is a Cauchy sequence. 
	For any positive integer $n$, 
	we have
	\begin{align*}
		\text{d}(\left<A_n\right>,\left<A_{n+1}\right>)
		&= \sqrt{2^{2n-1}pq\left( \frac{1}{2^{\frac{2^n}{\ln 2}}} \right)^2}\\
		&= \sqrt{pq 2^{2n-1-\frac{2^{n+1}}{\ln 2}}}\\
		&\le \sqrt{pq 2^{-1-\frac{2}{\ln 2}-n^2\ln2}},
	\end{align*}
	since $2^n\ge 1 + n\ln 2 + \frac{n^2}{2}(\ln 2)^2$ (the first three terms of its Taylor series).
	Denote $0<\sqrt{2^{-\ln 2}}=:a<1$. Then for any positive integers $n,m$, 
	we have 
	\begin{align*}
		&\text{d}(\left<A_n\right>,\left<A_{n+m}\right>) \\
		\le& \text{d}(\left<A_n\right>,\left<A_{n+1}\right>) +
		\text{d}(\left<A_{n+1}\right>,\left<A_{n+2}\right>) + \cdots \\&+
		\text{d}(\left<A_{n+m-1}\right>,\left<A_{n+m}\right>)\\
		= & \sqrt{ pq 2^{-1-\frac{2}{\ln 2}}}\left( a^{n^2} + a^{(n+1)^{2}} + \cdots + a^{(n+m-1)^2} \right)\\
		< & \sqrt{pq 2^{-1-\frac{2}{\ln 2}}} \left( a^{n^2} + a ^{n^2+1} + \cdots + a^{(n+m-1)^2} \right)\\
		= & \sqrt{pq 2^{-1-\frac{2}{\ln 2}}} \frac{a^{n^2}\left( 1-a^{(n+m-1)^2-n^2+1} \right)}{1-a}\\
		\le & \sqrt{pq 2^{-1-\frac{2}{\ln 2}}} \frac{a^{n^2}}{1-a}.
	\end{align*}
	Hence for any sufficiently small positive real number $\e$,
	choose positive integer $N>\sqrt{\log_a\left( \frac{\e(1-a)}{\sqrt{pq2^{-1-\frac{2}{\ln 2}}}} \right)}>0$,
	we have for all positive integers $n,m>N$, $\text{d}(\left<A_n\right>,\left<A_m\right>)<\e$, i.e.,
	the sequence \eqref{eqn:Cauchyseq.} is a Cauchy sequence.

	Second we prove that the sequence \eqref{eqn:Cauchyseq.} does not converge in $\Sigma_{\mu}$.
	For an arbitrarily given $\left<A_0\right>\in\Sigma_{\mu}$, where $A_0$ is the unique irreducible element of
	$\left<A_0\right>$, we claim that the sequence \eqref{eqn:Cauchyseq.} does not converge to $\left<A_0\right>$.
	We divide the proof of the claim into three cases.
	\romannumeral1) If $\left<A_0\right>$ equals $\left<A_m\right>$ for some $m\in\Z_{+}$,
	then $A_0=A_m$, and $\text{d}(\left<A_0\right>,\left<A_n\right>)>\text{d}(\left<A_0\right>,\left<A_{n-1}\right>)>
	\cdots>\text{d}(\left<A_0\right>,\left<A_{m+1}\right>)>
	\frac{1}{2^{\frac{2^m}{\ln 2}}}$
	for any positive integer $n > m+1$,
	i.e., in this case the sequence \eqref{eqn:Cauchyseq.} does not converge to $\left<A_0\right>$.
	\romannumeral2) If $\left<A_0\right>$ does not equal $\left<A_m\right>$ for any $m\in\Z_{+}$
	and $A_0=(a^0_{ij})_{(2^{n-1}p)\times (2^{n-1}q)}$ is of the same size as 
	$A_n=(a^n_{ij})_{(2^{n-1}p)\times (2^{n-1}q)}$ for some $n\in\Z_{+}$, then
	there exist integers $1\le i'\le 2^{n-1}p$ and $1\le j'\le 2^{n-1}q$ such that
	$a^0_{i'j'}\ne a^n_{i'j'}$, and
	$\text{d}(\left<A_0\right>,\left<A_r\right>)>\text{d}(\left<A_0\right>,\left<A_n\right>)\ge
	|a^0_{i'j'}- a^n_{i'j'}|>0$ for any positive integer $r>n$, i.e., 
	the sequence \eqref{eqn:Cauchyseq.} does not converge to $\left<A_0\right>$ either. \romannumeral3) If
	$\left<A_0\right>$ does not equal $\left<A_n\right>$ for any $n\in\Z_{+}$
	and $A_0$ is not of the same size as $A_n$ for any $n\in\Z_{+}$, let $A_0$ be of size $s\times t$,
	then $A_0\otimes I_{2^{n-1}p}$ and $A_n\otimes I_{s}$ are of the same size $(2^{n-1}sp)\times(2^{n-1}tp)$
	and $A_0\otimes I_{2^{n-1}p}$ does not equal $A_n\otimes I_{s}$ for any $n\in\Z_{+}$.
	It is because if $A_0\otimes I_{2^{m-1}p}=A_m\otimes I_{s}$ for some $m\in\Z_{+}$, then
	$\left<A_0\right>=\left<A_m\right>$, which is a contradiction.
	For each $n\in\Z_{+}$, the number of nonzero entries of $A_n\otimes I_s$ is $2^{2n-2}pqs$,
	and the number of nonzero entries of $A_0\otimes I_{2^{n-1}p}$ is at most $2^{n-1}stp$.
	Hence when $n$ is sufficiently large, $A_n\otimes I_s$ has much more nonzero entries than
	$A_0\otimes I_{2^{n-1}p}$ has. Chosen a sufficiently large $n$ satisfying that for 
	$A_n\otimes I_s=(b^n_{ij})_{(2^{n-1}sp)\times(2^{n-1}tp)}$ and
	$A_0\otimes I_{2^{n-1}p}=(b^0_{ij})_{(2^{n-1}sp)\times(2^{n-1}tp)}$, there exist
	$1\le i'\le 2^{n-1}sp$ and $1\le j'\le 2^{n-1}tp$ such that $b^n_{i'j'}\ne 0$ and $b^0_{i'j'}=0$,
	we then have $\text{d}(\left<A_0\right>,\left<A_n\right>)\ge|b^n_{i'j'}-b^0_{i'j'}|>0$.
	We furthermore have $\text{d}(\left<A_0\right>,\left<A_m\right>)\ge|b^n_{i'j'}-b^0_{i'j'}|>0$
	for any integer $m>n$, i.e., the sequence \eqref{eqn:Cauchyseq.} does not converge to $\left<A_0\right>$ either.



	
\end{proof}

\end{document}